\documentclass[11pt]{amsart}

\usepackage{amsmath}
\usepackage{mathtools}
\usepackage{amssymb}
\usepackage{mathrsfs}

\usepackage[T1]{fontenc}
\usepackage{microtype}
\usepackage{xcolor}

\usepackage[hidelinks]{hyperref}

\theoremstyle{plain}
\newtheorem{theorem}{Theorem}[section]
\newtheorem{proposition}[theorem]{Proposition}
\newtheorem{lemma}[theorem]{Lemma}
\newtheorem{corollary}[theorem]{Corollary}

\theoremstyle{definition}

\theoremstyle{remark}

\numberwithin{equation}{section}

\newcommand{\Q}{\mathbb Q}
\newcommand{\R}{\mathbb R}
\newcommand{\C}{\mathbb C}
\newcommand{\HH}{\mathbb H}

\begin{document}

\title[Base-field dependence of the Hurwitz problem]
{The Hurwitz sum-of-squares problem depends on the base field}

\author{Chi Zhang}
\address{College of Sciences, Northeastern University, Shenyang, Liaoning 110004, China}
\email{zhangchi@mail.neu.edu.cn}

\author{Haoran Zhu}
\address{Division of Mathematical Sciences, Nanyang Technological University, Singapore 637371}
\email{zhuh0031@e.ntu.edu.sg}

\subjclass[2020]{Primary 11E25; Secondary 11E04, 11E39, 14G05, 05B20}
\keywords{sum-of-squares formulae; Hurwitz problem; composition of quadratic forms; formally real fields; base-field dependence}

\date{}

\begin{abstract}
We show that the Hurwitz problem for sums of squares can depend on the base field. 
More precisely, we construct an explicit formula of type $[12,12,18]$ over every field of characteristic different from $2$ in which $-1$ is a square, whereas no such formula exists over any formally real field. In particular, a formula of this type exists over $\mathbb Q(i)$ and over $\mathbb C$, but not over $\mathbb Q$ or over $\mathbb R$.
This settles, in the negative, a longstanding conjecture of Shapiro from 1984, a conjecture of Adem from 1975, and answers a signed-formula problem raised by Shapiro in 2000.

\end{abstract}

\maketitle

\section{Introduction}

The composition of sums of squares lies at the interface of quadratic forms, topology, and algebraic geometry, and it has more recently appeared in algebraic complexity theory.  
This paper shows that, in the arbitrary-field setting, the existence of a formula of a fixed type can depend on the base field.
In particular, we settle, in the negative, Shapiro's 1984 field-independence conjecture.

\subsection{Hurwitz problem}

Let $F$ be a field of characteristic different from $2$.  
A sum-of-squares formula of type $[r,s,n]$ over $F$ is an identity
\begin{equation}\label{eq:intro-def}
 (x_1^2+\cdots+x_r^2)(y_1^2+\cdots+y_s^2)
 =
 z_1^2+\cdots+z_n^2,
\end{equation}
in which each $z_\alpha$ is an \textit{$F$-bilinear} form in $x=(x_1,\ldots,x_r)$ and $y=(y_1,\ldots,y_s)$.  
Throughout, identities such as \eqref{eq:intro-def} are understood as polynomial identities.  In particular, when $F=\C$, the square $z_\alpha^2$ is the ordinary algebraic square; no complex conjugation is involved.

The problem of deciding for which triples $[r,s,n]$ such a formula exists goes back to Hurwitz~\cite{Hurwitz1898,Hurwitz1923} in 1898, and is usually known as the \textit{Hurwitz problem} for sums of squares~\cite{ShapiroBook}.  

The classical two-, four-, and eight-square identities arise from multiplication in the complex numbers, quaternions, and octonions, and Hurwitz's theorem in the diagonal case $r=s=n$ is one of the starting points of the subject.  The later Hurwitz--Radon theory determines the optimal values in the family $[r,n,n]$ through the Hurwitz--Radon function~\cite{Radon1922,ShapiroBook}.  
The theory is also closely related to the composition of quadratic forms, studied classically by Albert~\cite{Albert1942}, and to Pfister's multiplicative forms~\cite{Pfister1965}.  
For broad accounts of the history and of the general theory of compositions of quadratic forms, we refer to Shapiro's survey and monograph \cite{ShapiroSurvey,ShapiroBook}.

Over the real numbers, formulae of type $[r,s,n]$ are equivalent to norm-preserving bilinear maps
\[
   \R^r\times \R^s \longrightarrow \R^n .
\]
Thus the existence problem is intertwined with topology: such maps lead to Hopf-type constructions, vector fields on spheres, and immersions of real projective spaces.  
Classical landmarks include the work of Stiefel~\cite{Stiefel1941} and Behrend~\cite{Behrend1939} on real and formally real fields, Adams's theorem on vector fields on spheres~\cite{Adams1962}, and the Atiyah--Bott--Shapiro theory of Clifford modules~\cite{AtiyahBottShapiro1964}. 
Important later contributions include the work of Adem~\cite{Adem1975,Adem1980,Adem1981}, Lam \cite{Lam1984,Lam1985}, Yuzvinsky~\cite{Yuzvinsky1981,Yuzvinsky1983,Yuzvinsky1984}, and Yiu~\cite{Yiu1986,Yiu1987}. 
More recently, the same problem has also appeared in algebraic complexity theory: Hrube\v{s}, Wigderson, and Yehudayoff~\cite{HrubesWigdersonYehudayoff2011} related complex sums-of-squares lengths to lower bounds for non-commutative arithmetic circuits.

The \textit{arbitrary-field} version of the Hurwitz problem took shape in the early 1980s, through work of Adem~\cite{Adem1980,Adem1981}, Shapiro~\cite{Shapiro1984ArbitraryField}, and Yuzvinsky~\cite{Yuzvinsky1983}.  
Dugger and Isaksen~\cite{DuggerIsaksen2005,DuggerIsaksen2007} later proved the Hopf condition over all fields of characteristic different from $2$, transporting classical topological obstructions into algebraic $K$-theory and motivic homotopy theory, which answered a longstanding problem of Lam~\cite{Lam1984}.  
Further refinements via \'etale homotopy and Hermitian $K$-theory were obtained by Dugger--Isaksen~\cite{DuggerIsaksen2008} and Xie \cite{Xie2014} based on \cite{Swan1985}.  
These results provide powerful \textit{necessary conditions} that are uniform in the field.

\subsection{Shapiro's field-independence conjecture}

The known obstructions are field-uniform, but the existence problem could still retain arithmetic information.  We shall say that a triple $[r,s,n]$ is \emph{admissible} over $F$ if a formula of type $[r,s,n]$ exists over $F$.  Shapiro conjectured in 1984 that admissibility of a fixed triple should be independent of the coefficient field, provided the characteristic is different from $2$ \cite[Conjecture~3.8]{ShapiroSurvey}.  This field-independence conjecture was later restated by Shapiro and Szyjewski \cite{ShapiroSzyjewski1992}, and it appears again in Shapiro's monograph as Conjecture~14.22 \cite{ShapiroBook}.

This conjecture was consistent with the systematic evidence available at the time.  
In several important regimes admissibility is already independent of the base field, notably when $r\leqslant 4$ and when $s\geqslant n-2$, as recorded in~\cite[Remark~1.3]{Xie2014}.
More generally, the main obstructions in the arbitrary-field problem are themselves field-uniform.  
Dugger and Isaksen~\cite{DuggerIsaksen2007} proved the Hopf condition over all fields of characteristic different from $2$, and Xie later strengthened this circle of results via Hermitian $K$-theory~\cite{Xie2014}.  Thus, all available obstruction-theoretic evidence pointed towards field-independence.

Lynn later recast the base-field question in algebro-geometric terms~\cite{Lynn2018}.  
To each triple $[r,s,n]$ one associates a scheme $\mathcal X_{r,s,n}$, defined over $\mathbb Z$, whose $K$-points parametrise formulae of type $[r,s,n]$ over $K$.  
Lynn showed that admissibility is invariant under extension between algebraically closed fields, and she also obtained comparison results between characteristic $0$ and sufficiently large positive characteristic.  
Thus, after passage to algebraic closures, admissibility exhibits a strong rigidity.

Our main theorem shows that this rigidity breaks down over non-algebraically closed fields. In fact, the separation already occurs in the classical towers $\Q\subset \Q(i)$ and $\R\subset \C$.

\begin{theorem}\label{thm:main}
The triple $[12,12,18]$ is admissible over every field of characteristic different from $2$ in which $-1$ is a square, but is not admissible over any formally real field. In particular, it is admissible over $\Q(i)$ and over $\C$, but not over $\Q$ or over $\R$.
\end{theorem}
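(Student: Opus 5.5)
The plan is to treat the two halves of the theorem separately: an explicit construction when $-1$ is a square, and an obstruction over formally real fields.

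\textbf{Existence when $-1$ is a square.} Write a putative formula of type $[12,12,18]$ as $z=\sum_k x_k A_k y$, with $A_1,\ldots,A_{12}$ being $18\times 12$ matrices. The identity \eqref{eq:intro-def} is then equivalent to the Hurwitz matrix equations
\[
A_k^{T}A_l+A_l^{T}A_k=2\delta_{kl}I_{12}\qquad(1\le k,l\le 12).
\]
Suppose $i\in F$ with $i^2=-1$. Then the form $z_1^2+\cdots+z_{18}^2$ is isometric to a sum of nine hyperbolic planes, via
\[
z_{2j-1}^2+z_{2j}^2=(z_{2j-1}+iz_{2j})(z_{2j-1}-iz_{2j}).
\]
So it suffices to produce bilinear forms $u_1,\dots,u_9,v_1,\dots,v_9$ with $\sum_j u_jv_j=(\sum x_k^2)(\sum y_l^2)$. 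This hyperbolic formulation gives much more freedom than the signed or integer setting, because the targets need not be squares.

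I would look for such $u_j,v_j$ with coefficients in $\mathbb Z[\tfrac12,i]$. The natural starting point is to glue known complex $[r,s,n]$ building blocks along a block decomposition $12=8+4$. First take the octonion and quaternion pieces, for instance $[8,8,8]$, $[4,4,4]$ and $[8,4,8]$. Then look for a cancellation of cross terms that becomes possible only when $i$ is available, for example by replacing a real orthogonal block with a complex-orthogonal but non-real one. The identity is finally verified as a finite check of the matrix equations above, which is a polynomial identity over $\mathbb Z[\tfrac12,i]$. It therefore specializes to every field of characteristic $\neq 2$ containing a square root of $-1$, including $\Q(i)$ and $\C$.

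\textbf{Nonexistence over formally real fields.} Suppose $F$ is formally real and admits a formula. Pass to a real closure $R\supseteq F$; the formula persists over $R$. By Tarski transfer, the existence of a formula of fixed type is a first-order sentence over real closed fields, so a formula then exists over $\R$. It therefore suffices to rule out $[12,12,18]$ over $\R$, and this is the main obstacle. The Hopf--Stiefel condition does not suffice: $\binom{18}{k}$ is even for $7\le k\le 11$. Since the Hopf condition is field-uniform, the obstruction must genuinely use positivity.

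My first attempt would be a KO-theoretic (Atiyah-type) refinement of the Hopf condition for nonsingular bilinear maps $\R^{12}\times\R^{12}\to\R^{18}$, in the style of Lam--Yiu bounds on $r*s$. Failing that, I would use an algebraic positivity argument. Over $\R$ one may normalize $A_1=\binom{I_{12}}{0}$. The remaining $A_k$ then split into a $12\times 12$ block $B_k$ and a $6\times 12$ block $C_k$. The Hurwitz equations give
\[
B_k^{T}=-B_k,\qquad B_kB_l+B_lB_k=\dots\ (\text{up to } C\text{-corrections}),
\]
and positive semidefiniteness of $C_k^{T}C_k$ forces rank constraints. These constraints produce a large Clifford-type subsystem: a Hurwitz--Radon family on a subspace of dimension at least $12-6=6$, to which the Hurwitz--Radon bound $\rho(\cdot)$ applies. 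Pushing this count through should contradict $\rho$ on the relevant dimensions.

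The delicate point is that this step must use the ordering, for example $\operatorname{tr}(C^{T}C)=0\Rightarrow C=0$. That is exactly what fails over $\Q(i)$, and it is consistent with the construction in the first part.
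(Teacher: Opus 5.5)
Your overall shape is right: an explicit construction when $i\in F$, and on the other side real closure plus Tarski transfer to reduce to $\R$. But neither half contains the decisive ingredient.

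\textbf{Existence.} You never produce a formula. The hyperbolic rewriting $z_{2j-1}^2+z_{2j}^2=(z_{2j-1}+iz_{2j})(z_{2j-1}-iz_{2j})$ is a correct equivalence, but it carries no information by itself. ``Glue $[8,8,8]$, $[8,4,8]$, $[4,4,4]$ and look for a cancellation that needs $i$'' is a search heuristic. You do not say which pieces occupy which coordinates, or why the cross terms cancel. Placing these pieces in orthogonal coordinates costs $8+8+8+4=28$ squares, and the whole difficulty is forcing the overlap down to $18$.

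The paper does this with a $4+4+4$ decomposition, not $8+4$:
\begin{itemize}
\item It uses a $3\times3$ block array of $4\times4$ quaternionic sign templates.
\item The off-diagonal blocks live in three mutually orthogonal $4$-dimensional coordinate spaces.
\item The three diagonal blocks share $\mathbf E_2,\mathbf E_3,\mathbf E_4$ and differ only in one entry, $\mathbf A$, $\mathbf B$ or $\mathbf C$.
\item The cross-term cancellations on the principal block faces reduce to quaternionic identities.
\end{itemize}
These identities work because $\mathbf A,\mathbf B,\mathbf C$ have Gram matrix $\left(\begin{smallmatrix}1&-1&1\\-1&1&1\\1&1&1\end{smallmatrix}\right)$. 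This matrix is not positive semidefinite, but it is realised in $K^3$ with the dot product once $i\in K$. Without an explicit array of this kind, the first half is unproved.

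\textbf{Non-existence over $\R$.} You correctly note that Hopf--Stiefel is silent, but your first attempt cannot succeed. Any obstruction that only uses nonsingularity of a bilinear map $\R^{12}\times\R^{12}\to\R^{18}$ is vacuous here. In the paper's construction all forms except $z_3$ are real, and $z_3=iw$ with $w$ real. This gives a real identity $(\sum x_i^2)(\sum y_j^2)=u_1^2+\cdots+u_{17}^2-w^2$, hence a real nonsingular bilinear map into $\R^{17}$, i.e.\ $12\#12\le17$.

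Your second attempt does not go through either. From $A_1=\binom{I_{12}}{0}$ you do get $B_k$ skew and $B_k^{T}B_k=I-C_k^{T}C_k$. But the eleven kernels $\ker C_k$, each of dimension $\ge 6$, need not share a nonzero vector. Even if they did, the $B_k$ need not preserve that intersection. So no Hurwitz--Radon family on a $6$-dimensional space appears, and ``pushing the count through should contradict $\rho$'' is not an argument.

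The paper does not attempt an elementary proof at this point. It invokes Yiu's theorem that no real formula of type $[12,12,20]$ exists, a result that uses the norm structure and not merely nonsingularity. A real $[12,12,18]$ formula, padded with two zero forms, would contradict it. That external input, or an equivalent obstruction that genuinely depends on positivity, is what your proposal is missing.
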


\begin{proof}
Lemma~\ref{lem:complex-existence} gives admissibility of $[12,12,18]$ over $\Q(i)$. Since the array constructed in Section~\ref{sec:array} has coefficients in $\mathbb Z[i]$, the same array specialises along any ring homomorphism $\mathbb Z[i]\to K$ sending $i$ to an element $u\in K$ with $u^2=-1$. Hence $[12,12,18]$ is admissible over every field $K$ of characteristic different from $2$ in which $-1$ is a square.

Now let $F$ be formally real and suppose that $[12,12,18]$ were admissible over $F$. After extension of scalars it would remain admissible over a real closure $F^{\mathrm{rc}}$. 
Admissibility of a fixed triple is the solvability of a finite system of polynomial equations with integer coefficients in the coefficients of the bilinear forms. By Tarski's principle for real closed fields, admissibility over $F^{\mathrm{rc}}$ would imply admissibility over $\R$, contradicting Lemma~\ref{lem:real-nonexistence}. The final sentence is immediate.
\end{proof}

In particular, in Lynn's algebro-geometric language, the parameter scheme $\mathcal X_{12,12,18}$ has $\Q(i)$- and $\C$-points, but no $\Q$- or $\R$-points.

Combining Theorem~\ref{thm:main} with Proposition~\ref{prop:yiu}, we obtain the following quantitative separation.

\begin{corollary}\label{cor:length-gap}
Let $F$ be a formally real field, and let $N_F(r,s)$ denote the least $n$
such that $[r,s,n]$ is admissible over $F$. Then
\[
   N_{F(i)}(12,12)\leqslant 18 < 21 \leqslant N_{F}(12,12).
\]
In particular, the least admissible length for $(12,12)$ differs over $\C$
and over $\R$ by at least $3$.
\end{corollary}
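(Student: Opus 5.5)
The corollary splits into two independent inequalities, and the plan is to treat each separately. For the upper bound $N_{F(i)}(12,12)\leqslant 18$, the field $F(i)=F[t]/(t^2+1)$ has characteristic different from $2$ because $F$ is formally real, hence of characteristic $0$. Moreover, $-1$ is a square in $F(i)$. Theorem~\ref{thm:main} therefore shows that $[12,12,18]$ is admissible over $F(i)$, and by definition of $N$ this gives $N_{F(i)}(12,12)\leqslant 18$.

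For the lower bound $21\leqslant N_F(12,12)$, I would invoke Proposition~\ref{prop:yiu}. I expect it records Yiu's determination of the real minimal lengths, namely that $[12,12,n]$ is not admissible over $\R$ for $n\leqslant 20$, or equivalently that $N_\R(12,12)=21$ or at least $\geqslant 21$. I would then transfer this to $F$ exactly as in the proof of Theorem~\ref{thm:main}. Suppose $[12,12,n]$ with $n\leqslant 20$ were admissible over $F$. Extending scalars to a real closure $F^{\mathrm{rc}}$ would preserve admissibility. Admissibility is the solvability of a finite system of integer polynomial equations in the coefficients, so Tarski's transfer principle would then give admissibility over $\R$, contradicting Proposition~\ref{prop:yiu}.

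If Proposition~\ref{prop:yiu} is instead stated directly for formally real fields, this transfer step is unnecessary. In either case we obtain $N_F(12,12)\geqslant 21$. The strict middle inequality $18<21$ is trivial.

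Finally, for the ``in particular'' statement I would take $F=\R$, so that $F(i)=\C$. This gives $N_\C(12,12)\leqslant 18$ and $N_\R(12,12)\geqslant 21$, and hence the two lengths differ by at least $3$. The only point needing care is matching the precise form of Proposition~\ref{prop:yiu}, that is, whether it is stated over $\R$ or over arbitrary formally real fields. The real-closure transfer handles the former case, so I do not expect a genuine obstacle.
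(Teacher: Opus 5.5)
Your proposal is correct and follows the paper's route: Theorem~\ref{thm:main} gives $N_{F(i)}(12,12)\leqslant 18$, and Yiu's result, transferred to $F$ through a real closure and Tarski's principle, gives $N_F(12,12)\geqslant 21$. One detail needs filling in: Proposition~\ref{prop:yiu} is stated only for $n=20$, so to exclude every $n\leqslant 20$ (which $N_F(12,12)\geqslant 21$ requires) you append $20-n$ zero bilinear forms, as in Lemma~\ref{lem:real-nonexistence}.
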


The same example also answers a signed version of the problem considered by
Shapiro.  Let $r*s$ denote the least length of a
real sum-of-squares formula of type $[r,s,n]$, and let $r\# s$ denote
the least dimension of the target of a nonsingular real bilinear map
$\mathbb R^r\times\mathbb R^s\to\mathbb R^n$.  Shapiro observed that a
real signed formula
\[
 \left(\sum_{i=1}^r x_i^2\right)\left(\sum_{j=1}^s y_j^2\right)
 =
 u_1^2+\cdots+u_p^2-v_1^2-\cdots-v_q^2
\]
implies $r\#s\le p$, and asked whether it must imply $r*s\leqslant p$;
see \cite[Ch.~14, p.~322]{ShapiroBook}.  Corollary~\ref{cor:signed-shapiro}
below gives a negative answer. 

The same construction also disproves a diagonal exact-value conjecture due to Adem, who conjectured the
corresponding diagonal values in \cite[\S 8, Problem~(8.6)]{Adem1975}: the least length
is predicted to be $26$ for $r=11,12$, $28$ for $r=13$, and $32$
for $r=14,15,16$; see~\cite[Ch.~13, pp.~269--270; Ch.~15, Notes, p.~361]{ShapiroBook}.
Our construction gives a formula of type $[12,12,18]$ over $\C$, so the
predicted value $26$ in the case $r=12$ cannot hold over all such fields.
Moreover, setting the last $x$- and $y$-variables equal to zero gives a
formula of type $[11,11,18]$, so the $r=11$ case fails as well.

\subsection{Consequences and outlook}
Theorem~\ref{thm:main} shows that the ground field is intrinsic to the Hurwitz problem rather than a dispensable parameter.
Corollary~\ref{cor:length-gap} identifies this dependence with a concrete diagonal exact-value problem originating in Adem's work and recorded in Shapiro's monograph.
Known topological, motivic, and $K$-theoretic methods give strong necessary conditions that are uniform across fields of characteristic $\neq 2$, but our result shows that no such field-uniform obstructions can by themselves characterise admissibility over all fields. What fails is not obstruction theory, but the hope that field-uniform obstructions are also sufficient. Any complete criterion for admissibility over arbitrary fields must therefore retain arithmetic information about the base field.
The signed-formula consequence further shows that allowing even one negative square can lower the number of positive squares below the least possible length of a positive-definite real sum-of-squares formula.

The construction isolates the field dependence in a small Gram-realisation problem.  The $12\times12$ coefficient array is assembled from $4\times4$ quaternionic sign templates arranged in a $3\times3$ block pattern.  At the purely combinatorial level these templates behave like orthogonal designs.  
The field enters only through the realisability of three vectors $\mathbf A$, $\mathbf B$, $\mathbf C$, introduced in Section~\ref{sec:array}, with Gram matrix $\left(
\begin{smallmatrix}
 1&-1& 1\\
-1& 1& 1\\
 1& 1& 1
\end{smallmatrix}
\right)$. This matrix is not positive semidefinite, and hence cannot arise from three unit vectors in a real positive-definite inner product.  
Over a field in which $-1$ is a square, however, it is realised by the standard bilinear dot product. Thus the counterexample is not created by altering the sign pattern itself, but by changing the bilinear geometry in which the same sign skeleton is realised.

This suggests a concrete strategy for further searches. 
One may first construct promising quaternionic, orthogonal-design, or Hadamard-type sign skeletons, and then ask over which fields the required Gram matrix can be realised.
The example in this paper shows that such Gram-realisation problems can detect arithmetic phenomena invisible to the existing field-uniform obstruction theory.

\subsection{Organisation}

In Section~\ref{sec:preliminaries}, we record a compact coefficient
criterion for sum-of-squares formulae in vector form.  In
Section~\ref{sec:separating-type}, we construct the explicit
$[12,12,18]$ formula and verify the coefficient equations using
quaternionic sign identities; we then prove the real nonexistence result by invoking Yiu's obstruction,
and derive the signed-formula consequence.

\subsection{Acknowledgements}
C.Z. was supported by the NSFC (Grant No. 12101111). 
H.Z. thanks Nanyang Technological University for its Research Scholarship and is grateful to his supervisor for constant support and encouragement.

\section{A coefficient criterion}\label{sec:preliminaries}

We shall use the following compact form of the coefficient equations.

\begin{proposition}\label{prop:criterion}
Let $F$ be a field with $\operatorname{char}F\ne 2$, and equip
$F^n$ with the bilinear form $u\cdot v=\sum_{\alpha=1}^n u_\alpha v_\alpha$.
Let $v_{ij}\in F^n$, for $1\leqslant i\leqslant r$ and $1\leqslant j\leqslant s$, and set
\[
 B(x,y)=\sum_{i=1}^r\sum_{j=1}^s v_{ij}x_i y_j
       =\bigl(z_1(x,y),\ldots,z_n(x,y)\bigr).
\]
Then
\[
 \left(\sum_{i=1}^r x_i^2\right)
 \left(\sum_{j=1}^s y_j^2\right)
 =
 \sum_{\alpha=1}^n z_\alpha(x,y)^2
\]
if and only if
\begin{equation}\label{eq:compact-criterion}
 v_{ij}\cdot v_{k\ell}+v_{i\ell}\cdot v_{kj}
 =
 2\delta_{ik}\delta_{j\ell}
\end{equation}
for all $1\leqslant i,k\leqslant r$ and $1\leqslant j,\ell\leqslant s$.
\end{proposition}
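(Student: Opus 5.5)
The plan is to expand both sides as polynomials in $x$ and $y$ and compare coefficients of the monomials $x_ix_ky_jy_\ell$. Since $\sum_\alpha z_\alpha^2 = B(x,y)\cdot B(x,y)$, bilinearity of the dot product gives
\[
 \sum_{\alpha} z_\alpha^2=\sum_{i,k=1}^r\sum_{j,\ell=1}^s (v_{ij}\cdot v_{k\ell})\,x_ix_ky_jy_\ell ,
\]
and the left-hand side is $\sum_{i,k}\sum_{j,\ell}\delta_{ik}\delta_{j\ell}\,x_ix_ky_jy_\ell$. Both sides are quartic forms, and each monomial $x_ix_ky_jy_\ell$ depends only on the unordered pairs $\{i,k\}$ and $\{j,\ell\}$. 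The identity is therefore equivalent to equality of the sums of coefficients over each orbit of index quadruples under the swaps $i\leftrightarrow k$ and $j\leftrightarrow \ell$.

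I will treat the four cases separately.

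\emph{Case $i=k$, $j=\ell$.} The orbit is a single term, so the condition is $v_{ij}\cdot v_{ij}=1$. This is \eqref{eq:compact-criterion} with both sides halved.

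\emph{Case $i=k$, $j\neq\ell$.} The orbit is $\{(i,j,i,\ell),(i,\ell,i,j)\}$, so the condition is $2\,v_{ij}\cdot v_{i\ell}=0$. This agrees with \eqref{eq:compact-criterion}, which reads $v_{ij}\cdot v_{i\ell}+v_{i\ell}\cdot v_{ij}=0$, using symmetry of the dot product.

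\emph{Case $i\neq k$, $j=\ell$.} This is symmetric to the previous case.

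\emph{Case $i\neq k$, $j\neq\ell$.} The orbit has four elements, giving the condition
\[
 v_{ij}\cdot v_{k\ell}+v_{kj}\cdot v_{i\ell}+v_{i\ell}\cdot v_{kj}+v_{k\ell}\cdot v_{ij}=0.
\]
By symmetry of the dot product this is $2(v_{ij}\cdot v_{k\ell}+v_{i\ell}\cdot v_{kj})=0$.

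Conversely, \eqref{eq:compact-criterion} for the quadruple $(i,j,k,\ell)$ implies it for all permuted quadruples, since it is invariant under $i\leftrightarrow k$ and under $j\leftrightarrow\ell$, again by symmetry of the dot product. So the orbit conditions and the full family \eqref{eq:compact-criterion} are equivalent.

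The only delicate point is dividing by $2$, which is where $\operatorname{char}F\neq 2$ is used. The other bookkeeping issue is that comparing coefficients of a polynomial identity requires grouping the index quadruples by monomial before equating. That grouping is exactly the orbit case analysis above, so I expect no further obstacle.
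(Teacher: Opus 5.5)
Your proof is correct, but it takes a somewhat different route from the paper's.

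The paper works coordinate-free. It double-polarises the quartic identity into the four-linear identity \eqref{eq:polarised-criterion} in independent variables $x,x',y,y'$. Substituting basis vectors then gives \eqref{eq:compact-criterion} in one stroke, with no case distinction. The converse follows by multilinearity: set $x'=x$, $y'=y$ and divide by $2$.

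You instead expand $B(x,y)\cdot B(x,y)$ in monomials and compare coefficients. This forces you to group the index quadruples by the monomial $x_ix_ky_jy_\ell$, which produces your four orbit types. Your orbit sums are the coordinate version of polarisation. Each orbit sum equals the value of the polarised form at $x=\mathbf e_i$, $x'=\mathbf e_k$, $y=\mathbf f_j$, $y'=\mathbf f_\ell$, up to a factor of $2$ in the diagonal and fully off-diagonal cases.

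Your version buys transparency. It makes explicit that coefficient comparison is legitimate because the identity is a polynomial identity. It also shows exactly where $\operatorname{char}F\ne2$ enters: only in the cases $i=k,\ j=\ell$ and $i\ne k,\ j\ne\ell$, while the mixed cases match \eqref{eq:compact-criterion} verbatim. The paper's version buys brevity and avoids the bookkeeping, at the cost of compressing the polarisation step into a single sentence.
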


\begin{proof}
The displayed sum-of-squares identity is
\[
 B(x,y)\cdot B(x,y)
 =
 \left(\sum_i x_i^2\right)
 \left(\sum_j y_j^2\right).
\]
Since $\operatorname{char}F\ne2$, double polarisation gives the
equivalent identity
\begin{equation}\label{eq:polarised-criterion}
 B(x,y)\cdot B(x',y')
 +
 B(x,y')\cdot B(x',y)
 =
 2
 \left(\sum_i x_i x_i'\right)
 \left(\sum_j y_j y_j'\right).
\end{equation}
Evaluating \eqref{eq:polarised-criterion} at
$x=\mathbf e_i$, $x'=\mathbf e_k$, $y=\mathbf f_j$, and
$y'=\mathbf f_\ell$, where $(\mathbf e_i)$ and $(\mathbf f_j)$
are the standard bases of $F^r$ and $F^s$, gives
\eqref{eq:compact-criterion}.  Conversely,
\eqref{eq:compact-criterion} implies \eqref{eq:polarised-criterion} by
bilinearity; setting $x'=x$ and $y'=y$, and dividing by $2$, gives
the original identity.
\end{proof}

\section{The separating type $[12,12,18]$}\label{sec:separating-type}

We now prove the explicit separation.  The positive part is an explicit formula over $\Q(i)$; the negative part follows from Yiu's nonexistence theorem for real formulae of type $[12,12,20]$.

\subsection{The array over $\Q(i)$}\label{sec:array}
Let $K=\Q(i)$, and let
\[
 V=K^{18}=K^3\oplus K^{15}
\]
with the bilinear dot product $u\cdot v=\sum_{\alpha=1}^{18}u_\alpha v_\alpha$. 
Under the standard embedding $K\subset\C$, this is not the standard Hermitian inner product.

Let $\mathbf e_1,\ldots,\mathbf e_{18}$ be the standard basis of $V$.
Set
\[
 \mathbf A=\mathbf e_1,\qquad
 \mathbf B=-\mathbf e_1+\mathbf e_2+i\mathbf e_3,\qquad
 \mathbf C=\mathbf e_1+\mathbf e_2-i\mathbf e_3,
\]
and, for $2\leqslant m\leqslant16$, put
\[
 \mathbf E_m=\mathbf e_{m+2}.
\]
The subscript $m$ is chosen to match the labels in the array below.
Then
\begin{equation}\label{eq:ABC}
 \mathbf A\cdot\mathbf A
 =
 \mathbf B\cdot\mathbf B
 =
 \mathbf C\cdot\mathbf C
 =
 1,
 \qquad
 \mathbf A\cdot\mathbf B=-1,\quad
 \mathbf A\cdot\mathbf C=1,\quad
 \mathbf B\cdot\mathbf C=1,
\end{equation}
while $\mathbf E_2,\ldots,\mathbf E_{16}$ are orthonormal and
orthogonal to $\mathbf A,\mathbf B,\mathbf C$.

We display a $12\times12$ array $(v_{ij})$ with entries in $V$. In the table, $+m$ and $-m$ denote $+\mathbf E_m$ and $-\mathbf E_m$, respectively.

\begin{equation}\label{eq:main-table}
\renewcommand{\arraystretch}{1.1}
\setlength{\arraycolsep}{2.4pt}
\left[
\begin{array}{rrrrrrrrrrrr}
\mathbf A& +2& +3& +4& +5& +6& +7& +8& +9&+10&+11&+12\\
-2& \mathbf A& -4& +3& -6& +5& +8& -7&-10& +9&-12&+11\\
-3& +4& \mathbf A& -2& -7& -8& +5& +6&+11&-12& -9&+10\\
-4& -3& +2& \mathbf A& -8& +7& -6& +5&-12&-11&+10& +9\\
+5& -6& -7& -8& \mathbf B& +2& +3& +4&+13&+14&+15&+16\\
+6& +5& -8& +7& -2& \mathbf B& -4& +3&-14&+13&-16&+15\\
+7& +8& +5& -6& -3& +4& \mathbf B& -2&+15&-16&-13&+14\\
+8& -7& +6& +5& -4& -3& +2& \mathbf B&-16&-15&+14&+13\\
-9&+10&-11&+12&-13&+14&-15&+16& \mathbf C& -2& +3& -4\\
-10& -9&+12&+11&-14&-13&+16&+15& +2& \mathbf C& -4& -3\\
-11&+12& +9&-10&-15&+16&+13&-14& -3& +4& \mathbf C& -2\\
-12&-11&-10& -9&-16&-15&-14&-13& +4& +3& +2& \mathbf C
\end{array}
\right].
\end{equation}

For $\alpha=1,\dots,18$ define
\begin{equation}\label{eq:def-zalpha}
 z_\alpha(x,y)=\sum_{i=1}^{12}\sum_{j=1}^{12}(v_{ij})_\alpha x_i y_j.
\end{equation}
In other words, the coefficient of $x_i y_j$ in $z_\alpha$ is the $\alpha$-th coordinate of the vector $v_{ij}$.

To make the construction concrete, the first three bilinear forms are
\begin{align*}
 z_1 &= \sum_{t=1}^{4}x_t y_t
       -\sum_{t=5}^{8}x_t y_t
       +\sum_{t=9}^{12}x_t y_t,\\
 z_2 &= \sum_{t=5}^{12}x_t y_t,\\
 z_3 &= i\sum_{t=5}^{8}x_t y_t
       -i\sum_{t=9}^{12}x_t y_t.
\end{align*}
For $2\leqslant m\leqslant16$, the form $z_{m+2}$ is obtained by summing the
monomials $x_i y_j$ at positions labelled $+m$, and subtracting
those at positions labelled $-m$.  For example,
\begin{align*}
 z_4={}&x_1y_2-x_2y_1-x_3y_4+x_4y_3
        +x_5y_6-x_6y_5-x_7y_8+x_8y_7 \\
      &-x_9y_{10}+x_{10}y_9-x_{11}y_{12}+x_{12}y_{11}.
\end{align*}

\subsection{Verification of the coefficient equations}
\label{sec:verification}

We verify \eqref{eq:compact-criterion} for the array
\eqref{eq:main-table}.  The verification keeps the $4\times4$ sign
patterns explicit, and then explains the identities among them by means
of quaternionic multiplication.

Let
\[
 q_0=1,\qquad q_1=\mathbf i,\qquad q_2=\mathbf j,\qquad q_3=\mathbf k
\]
be the standard quaternionic basis.  The bold symbols are quaternionic
units and should not be confused with the scalar $i\in K=\Q(i)$.  We
also put
\[
 \tau(q)=\mathbf j q(-\mathbf j),
\]
so that
\[
 \tau(q_0)=q_0,\qquad
 \tau(q_1)=-q_1,\qquad
 \tau(q_2)=q_2,\qquad
 \tau(q_3)=-q_3.
\]

We shall use the following five basic sign templates, together with
the transpose of $\mathsf P$:
\begin{equation}\label{eq:explicit-sign-templates}
\setlength{\arraycolsep}{3pt}
\begin{aligned}
\mathsf D&=
\begin{pmatrix}
 q_0& q_1& q_2& q_3\\
-q_1& q_0&-q_3& q_2\\
-q_2& q_3& q_0&-q_1\\
-q_3&-q_2& q_1& q_0
\end{pmatrix},
&
\mathsf P&=
\begin{pmatrix}
 q_0& q_1& q_2& q_3\\
-q_1& q_0& q_3&-q_2\\
-q_2&-q_3& q_0& q_1\\
-q_3& q_2&-q_1& q_0
\end{pmatrix},
\\[3mm]
\mathsf Y&=
\begin{pmatrix}
 q_0& q_1& q_2& q_3\\
-q_1& q_0&-q_3& q_2\\
 q_2&-q_3&-q_0& q_1\\
-q_3&-q_2& q_1& q_0
\end{pmatrix},
&
\mathsf Z&=
\begin{pmatrix}
-q_0& q_1&-q_2& q_3\\
-q_1&-q_0& q_3& q_2\\
-q_2& q_3& q_0&-q_1\\
-q_3&-q_2&-q_1&-q_0
\end{pmatrix},
\\[3mm]
\mathsf S&=
\begin{pmatrix}
 q_0&-q_1& q_2&-q_3\\
 q_1& q_0&-q_3&-q_2\\
-q_2& q_3& q_0&-q_1\\
 q_3& q_2& q_1& q_0
\end{pmatrix}.&
\end{aligned}
\end{equation}
These matrices are sign templates.  If
$U=(u_0,u_1,u_2,u_3)$ is a four-tuple of vectors, then
$\mathsf M(U)$ is obtained from the template $\mathsf M$ by replacing
each $q_t$ by $u_t$, keeping the displayed signs.  We also use
$\mathsf P(U)^{\mathsf t}$ for the transpose of $\mathsf P(U)$.

With
\[
\begin{aligned}
 U_0&=(\mathbf A,\mathbf E_2,\mathbf E_3,\mathbf E_4),
&
 U_B&=(\mathbf B,\mathbf E_2,\mathbf E_3,\mathbf E_4),
&
 U_C&=(\mathbf C,\mathbf E_2,\mathbf E_3,\mathbf E_4),\\
 U_{12}&=(\mathbf E_5,\mathbf E_6,\mathbf E_7,\mathbf E_8),
&
 U_{13}&=(\mathbf E_9,\mathbf E_{10},\mathbf E_{11},\mathbf E_{12}),
&
 U_{23}&=(\mathbf E_{13},\mathbf E_{14},\mathbf E_{15},\mathbf E_{16}),
\end{aligned}
\]
the array \eqref{eq:main-table} is the block matrix
\begin{equation}\label{eq:block-array-direct}
\mathcal V=
\begin{pmatrix}
 \mathsf D(U_0)                 & \mathsf P(U_{12})      & \mathsf Y(U_{13})\\
 \mathsf P(U_{12})^{\mathsf t}  & \mathsf D(U_B)         & \mathsf Y(U_{23})\\
 \mathsf Z(U_{13})              & \mathsf Z(U_{23})      & \mathsf S(U_C)
\end{pmatrix}.
\end{equation}
The entries of this block matrix are exactly the vectors $v_{ij}$
displayed in \eqref{eq:main-table}.

The reason for the notation in \eqref{eq:explicit-sign-templates} is the
following uniform quaternionic description of the entries:
\begin{equation}\label{eq:quat-sign-templates}
\begin{array}{lll}
 \mathsf D_{ab}=\overline{q_a}q_b,
&
 \mathsf P_{ab}=q_b\overline{q_a},
&
 (\mathsf P^{\mathsf t})_{ab}=q_a\overline{q_b},
\\[2mm]
 \mathsf S_{ab}=\tau(q_b\overline{q_a}),
&
 \mathsf Y_{ab}=\tau(q_a)q_b,
&
 \mathsf Z_{ab}=-\tau(q_b)q_a,
\end{array}
\qquad 0\leqslant a,b\leqslant3 .
\end{equation}
Here the row and column indices of each $4\times4$ template are
$0,1,2,3$.  Thus $\mathsf M_{ab}$ denotes the signed quaternionic
basis element in the $(a,b)$-entry of the template $\mathsf M$.

For the verification we use two bilinear forms on $\HH$:
\[
 (u,v)_+=\operatorname{Re}(u\overline v),
 \qquad
 (u,v)_-=-\operatorname{Re}(uv).
\]
In the basis $q_0,q_1,q_2,q_3$, their Gram matrices are
$I_4$ and $\operatorname{diag}(-1,1,1,1)$, respectively.

\begin{lemma}\label{lem:quat-block-identities}
For $0\leqslant a,b,c,d\leqslant3$, one has
\begin{align}
 (\mathsf M_{ab},\mathsf M_{cd})_+
 +(\mathsf M_{ad},\mathsf M_{cb})_+
 &=
 2\delta_{ac}\delta_{bd}
 &&(\mathsf M\in\{\mathsf D,\mathsf P,\mathsf P^{\mathsf t},    \mathsf S,\mathsf Y,\mathsf Z\}),
 \label{eq:quat-identity-same-block}\\
 (\mathsf D_{ab},\mathsf D_{cd})_-
 +(\mathsf P_{ad},\mathsf P_{bc})_+
 &=0,
 \label{eq:quat-identity-DP}\\
 (\mathsf D_{ab},\mathsf S_{cd})_+
 +(\mathsf Y_{ad},\mathsf Z_{cb})_+
 &=0.
 \label{eq:quat-identity-DSYZ}
\end{align}
\end{lemma}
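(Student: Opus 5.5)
The plan is to use the quaternionic descriptions \eqref{eq:quat-sign-templates} rather than the explicit $4\times4$ tables. Each template entry is a signed basis quaternion, and the identities are bilinear in the entries, so the signs are carried automatically once each $\mathsf M_{ab}$ is replaced by its product formula. I will rely on a few standard facts, none of which needs an argument beyond one line:
\begin{itemize}
\item $\operatorname{Re}(uv)=\operatorname{Re}(vu)$, so $\operatorname{Re}$ is cyclic on products;
\item $\operatorname{Re}(\overline u)=\operatorname{Re}(u)$ and $\overline{uv}=\overline v\,\overline u$;
\item $u\overline v+v\overline u=2(u,v)_+$ is a real scalar, and it is central;
\item $(q_a,q_b)_+=\delta_{ab}$;
\item $\tau$ is conjugation by the unit $\mathbf j$, so it is a $\operatorname{Re}$-preserving algebra automorphism and involution commuting with bar. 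Explicitly, $\tau(\mathbf j)=\mathbf j$, so $\tau^2=\mathrm{id}$.
\end{itemize}

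First I treat \eqref{eq:quat-identity-same-block} for $\mathsf M=\mathsf D$. The first term is $(\overline{q_a}q_b,\overline{q_c}q_d)_+=\operatorname{Re}(\overline{q_a}q_b\overline{q_d}q_c)$. The swapped term is $\operatorname{Re}(\overline{q_a}q_d\overline{q_b}q_c)$. Their sum is $\operatorname{Re}\bigl(\overline{q_a}(q_b\overline{q_d}+q_d\overline{q_b})q_c\bigr)$. The middle factor equals $2\delta_{bd}$, a central scalar, so the sum is $2\delta_{bd}\operatorname{Re}(\overline{q_a}q_c)=2\delta_{ac}\delta_{bd}$.

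The other five templates follow the same pattern: write the pairing as $\operatorname{Re}$ of a four-fold product and add the $b\leftrightarrow d$ term. Before adding, conjugate inside $\operatorname{Re}$ or use cyclicity so that the two varying factors appear as $q_b\overline{q_d}+q_d\overline{q_b}$ or $q_a\overline{q_c}+q_c\overline{q_a}$. Then pull out the scalar. Factors of $\tau$ are handled either by applying $\tau$ to the whole product, or by noting that $\tau(x)\overline{\tau(y)}+\tau(y)\overline{\tau(x)}=\tau(2(x,y)_+)=2(x,y)_+$. For $\mathsf Z$, for example, the sum becomes $\operatorname{Re}\bigl(\tau(q_b)(q_a\overline{q_c}+q_c\overline{q_a})\tau(\overline{q_d})\bigr)$.

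Next, for \eqref{eq:quat-identity-DP}, the first term is $(\mathsf D_{ab},\mathsf D_{cd})_-=-\operatorname{Re}(\overline{q_a}q_b\overline{q_c}q_d)$. The second is $(\mathsf P_{ad},\mathsf P_{bc})_+=\operatorname{Re}(q_d\overline{q_a}\,q_b\overline{q_c})$, using $\overline{q_c\overline{q_b}}=q_b\overline{q_c}$. Moving $q_d$ to the end by cyclicity shows the two terms cancel.

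For \eqref{eq:quat-identity-DSYZ}, the first term is $\operatorname{Re}\bigl(\overline{q_a}q_b\,\overline{\tau(q_d\overline{q_c})}\bigr)=\operatorname{Re}\bigl(\overline{q_a}q_b\,\tau(q_c\overline{q_d})\bigr)$. The second is $-\operatorname{Re}\bigl(\tau(q_a)q_d\overline{q_c}\,\tau(\overline{q_b})\bigr)$. Applying $\tau$ turns this into $-\operatorname{Re}\bigl(q_a\tau(q_d\overline{q_c})\overline{q_b}\bigr)$. Conjugating inside $\operatorname{Re}$ gives $-\operatorname{Re}\bigl(q_b\tau(q_c\overline{q_d})\overline{q_a}\bigr)$, and cyclicity turns this into $-\operatorname{Re}\bigl(\overline{q_a}q_b\tau(q_c\overline{q_d})\bigr)$, which cancels the first term.

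The main obstacle I expect is not in these manipulations but in making sure \eqref{eq:quat-sign-templates} really matches the signed tables \eqref{eq:explicit-sign-templates} entry by entry. Particular care is needed with the $\tau$-twisted templates $\mathsf S,\mathsf Y,\mathsf Z$ and with the minus sign in $\mathsf Z$. I would check this directly from the multiplication table, for instance $\mathsf Y_{20}=\tau(\mathbf j)=q_2$ and $\mathsf Z_{00}=-q_0$. Since the lemma is phrased in terms of $\mathsf M_{ab}$, I take \eqref{eq:quat-sign-templates} as the definition and the tables as a transcription of it.
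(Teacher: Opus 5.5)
Your proposal is correct and follows essentially the same route as the paper. Your proof of \eqref{eq:quat-identity-DP}, and your proof of \eqref{eq:quat-identity-DSYZ} by cyclicity, conjugation and $\tau$ being a $\operatorname{Re}$-preserving involutive automorphism, match the paper's; the paper first proves the general identity \eqref{eq:twisted-quaternion-identity} and then specialises to basis elements. The only cosmetic difference is in \eqref{eq:quat-identity-same-block}: the paper polarises the norm multiplicativity $N(\mu_{\mathsf M}(x,y))=N(x)N(y)$ of the six bilinear maps, while you pull out the central scalar $u\overline v+v\overline u=2(u,v)_+$ by hand, which amounts to the same fact.
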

\begin{proof}
Write $N(u)=(u,u)_+$.  By construction, the templates in $\{\mathsf D,\mathsf P,\mathsf P^{\mathsf t},    \mathsf S,\mathsf Y,\mathsf Z\}$
are the coefficient arrays of the six bilinear maps
\[
\begin{array}{lll}
 \mu_{\mathsf D}(x,y)=\overline x y,
&
 \mu_{\mathsf P}(x,y)=y\overline x,
&
 \mu_{\mathsf P^{\mathsf t}}(x,y)=x\overline y,
\\[1mm]
 \mu_{\mathsf S}(x,y)=\tau(y\overline x),
&
 \mu_{\mathsf Y}(x,y)=\tau(x)y,
&
 \mu_{\mathsf Z}(x,y)=-\tau(y)x .
\end{array}
\]
Each of these maps preserves the product norm:
\[
 N(\mu_{\mathsf M}(x,y))=N(x)N(y).
\]
Polarising this identity in both variables gives
\[
 (\mu_{\mathsf M}(x,y),\mu_{\mathsf M}(z,w))_+
 +
 (\mu_{\mathsf M}(x,w),\mu_{\mathsf M}(z,y))_+
 =
 2(x,z)_+(y,w)_+ .
\]
Taking $x=q_a$, $y=q_b$, $z=q_c$, and $w=q_d$, and using that
$q_0,\ldots,q_3$ is orthonormal for $(\, ,\,)_+$, gives
\eqref{eq:quat-identity-same-block}.

For \eqref{eq:quat-identity-DP}, cyclic invariance of the real part gives
\[
 (\mathsf P_{ad},\mathsf P_{bc})_+
 =
 \operatorname{Re}(q_d\overline{q_a}q_b\overline{q_c})
 =
 \operatorname{Re}(\overline{q_a}q_b\overline{q_c}q_d)
 =
 -(\mathsf D_{ab},\mathsf D_{cd})_- .
\]

It remains to prove \eqref{eq:quat-identity-DSYZ}.  We use the elementary
identity
\begin{equation}\label{eq:twisted-quaternion-identity}
 (\overline x y,\tau(w\overline z))_+
 =
 (\tau(x)w,\tau(y)z)_+,
 \qquad x,y,z,w\in\mathbb H .
\end{equation}
Indeed, since $\tau$ is an involutive algebra automorphism preserving
$(\, ,\,)_+$, we have
\begin{align*}
 (\overline x y,\tau(w\overline z))_+
 &=
 (\tau(\overline x)\tau(y),w\overline z)_+                                      \\
 &=
 \operatorname{Re}\bigl(\tau(\overline x)\tau(y)z\overline w\bigr)              \\
 &=
 \operatorname{Re}\bigl(\tau(x)w\overline z\,\tau(\overline y)\bigr)             \\
 &=
 (\tau(x)w,\tau(y)z)_+ .
\end{align*}
Here the third equality uses $\operatorname{Re}(r)=\operatorname{Re}(\overline r)$
and cyclic invariance of the real part.

Applying \eqref{eq:twisted-quaternion-identity} with $x=q_a$, $y=q_b$, $z=q_c$, and $w=q_d$, we get
\[
 (\mathsf D_{ab},\mathsf S_{cd})_+
 =
 (\mathsf Y_{ad},-\mathsf Z_{cb})_+
 =
 -(\mathsf Y_{ad},\mathsf Z_{cb})_+.
\]
This is \eqref{eq:quat-identity-DSYZ}.
\end{proof}

\begin{proposition}
\label{prop:block-array-verification}
The entries $v_{ij}$ of the array \eqref{eq:main-table} satisfy
\begin{equation}\label{eq:block-array-compact}
 v_{ij}\cdot v_{k\ell}
 +
 v_{i\ell}\cdot v_{kj}
 =
 2\delta_{ik}\delta_{j\ell}
\end{equation}
for all $1\leqslant i,k\leqslant12$ and $1\leqslant j,\ell\leqslant12$.
\end{proposition}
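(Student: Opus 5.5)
We would prove Proposition~\ref{prop:block-array-verification} using the block description \eqref{eq:block-array-direct} and the quaternionic identities of Lemma~\ref{lem:quat-block-identities}. Write each index $i\in\{1,\dots,12\}$ as a pair (block $I\in\{1,2,3\}$, position $a\in\{0,1,2,3\}$), and similarly $j\leftrightarrow(J,b)$, $k\leftrightarrow(K,c)$, $\ell\leftrightarrow(L,d)$. The four entries in \eqref{eq:block-array-compact} then come from the blocks $(I,J)$, $(K,L)$, $(I,L)$ and $(K,J)$.

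The first step is to record the \emph{support} of each block, meaning the set of vectors among $\mathbf A,\mathbf B,\mathbf C,\mathbf E_m$ that occur in it:
\begin{itemize}
\item the diagonal blocks use $\{\mathbf A,\mathbf E_2,\mathbf E_3,\mathbf E_4\}$, $\{\mathbf B,\mathbf E_2,\mathbf E_3,\mathbf E_4\}$ and $\{\mathbf C,\mathbf E_2,\mathbf E_3,\mathbf E_4\}$;
\item the blocks $(1,2)$ and $(2,1)$ use $U_{12}$;
\item the blocks $(1,3)$ and $(3,1)$ use $U_{13}$;
\item the blocks $(2,3)$ and $(3,2)$ use $U_{23}$.
\end{itemize}
By \eqref{eq:ABC}, entries from blocks with different supports are orthogonal. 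If $U,U'$ are two of the four-tuples, then $\mathsf M(U)_{ab}\cdot\mathsf M'(U')_{cd}$ equals $(\mathsf M_{ab},\mathsf M'_{cd})_\pm$, where the sign is read off from the Gram matrix $(u_s\cdot u'_t)$. This Gram matrix is $I_4$ when $U=U'$, and also for the pairs $(U_0,U_C)$ and $(U_B,U_C)$. For the pair $(U_0,U_B)$ it is $\operatorname{diag}(-1,1,1,1)$, because $\mathbf A\cdot\mathbf B=-1$. So a pairing involving both $U_0$ and $U_B$ gives $(\,,\,)_-$, and every other pairing gives $(\,,\,)_+$. This dictionary is the only place where the non-real Gram matrix enters.

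Next comes the case split.
\begin{itemize}
\item \emph{$I=K$ and $J=L$.} All four entries lie in one block $\mathsf M(U)$ with orthonormal Gram matrix, so \eqref{eq:quat-identity-same-block} gives $2\delta_{ac}\delta_{bd}=2\delta_{ik}\delta_{j\ell}$.
\item \emph{$I=K$ but $J\ne L$, or $J=L$ but $I\ne K$.} Each product pairs two distinct blocks in the same block-row or the same block-column. Inspecting \eqref{eq:block-array-direct}, the three blocks in any block-row or block-column have pairwise disjoint supports, so both terms vanish, as does the right side.
\item \emph{$I\ne K$ and $J\ne L$, with $\{I,K\}\ne\{J,L\}$.} A direct check of the finitely many index patterns should show that each of the pairs $(I,J),(K,L)$ and $(I,L),(K,J)$ has disjoint supports. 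For example, $(1,1)$ against $(2,3)$ and $(1,3)$ against $(2,1)$ are both disjoint. So both terms vanish.
\item \emph{$I\ne K$ and $\{I,K\}=\{J,L\}$.} These are the genuine cases. Swapping $(i,j)\leftrightarrow(k,\ell)$ or $j\leftrightarrow\ell$ merely permutes the two terms, so it suffices to treat three subcases.
\end{itemize}

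The three genuine subcases are handled as follows.
\begin{itemize}
\item $(I,K,J,L)=(1,2,1,2)$. The terms are $\mathsf D(U_0)_{ab}\cdot\mathsf D(U_B)_{cd}=(\mathsf D_{ab},\mathsf D_{cd})_-$ and $\mathsf P(U_{12})_{ad}\cdot\mathsf P(U_{12})^{\mathsf t}_{cb}=(\mathsf P_{ad},\mathsf P_{bc})_+$. Their sum vanishes by \eqref{eq:quat-identity-DP}.
\item $(1,3,1,3)$. The terms are $(\mathsf D_{ab},\mathsf S_{cd})_+$ and $(\mathsf Y_{ad},\mathsf Z_{cb})_+$. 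Their sum vanishes by \eqref{eq:quat-identity-DSYZ}.
\item $(2,3,2,3)$. The same computation applies with $U_B,U_C,U_{23}$ in place of $U_0,U_C,U_{13}$, since $\mathbf B\cdot\mathbf C=1$ again gives the form $(\,,\,)_+$.
\end{itemize}

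The main obstacle is bookkeeping rather than ideas. One must make sure the transposes and the index order $(a,d),(c,b)$ in each case match exactly the forms in Lemma~\ref{lem:quat-block-identities}, for instance that $(\mathsf P^{\mathsf t})_{cb}=\mathsf P_{bc}$. One must also confirm that the disjoint-support claim in the third case really covers every pattern. We would settle the latter by listing the nine block supports in a $3\times3$ table and checking it directly.
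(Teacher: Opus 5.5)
Your proof is correct and follows the paper's argument essentially step for step. It uses the same orthogonal support decomposition, the same case split (single block; same block-row or block-column; non-face patterns; principal $2\times2$ faces), and the same reduction of the face cases to \eqref{eq:quat-identity-DP} and \eqref{eq:quat-identity-DSYZ}, via the Gram matrix $\operatorname{diag}(-1,1,1,1)$ for the pair $(U_0,U_B)$ and $I_4$ otherwise. One wording fix: ``entries from blocks with different supports are orthogonal'' should read ``disjoint supports'', because the three diagonal blocks share $\mathbf E_2,\mathbf E_3,\mathbf E_4$ and pair nontrivially; your case analysis already handles them correctly.
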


\begin{proof}
Write a row or column index as
\[
 (p,a),\qquad p\in\{1,2,3\},\quad a\in\{0,1,2,3\},
\]
where $(p,a)$ corresponds to the ordinary index $4(p-1)+a+1$.
Let $T_{pq}$ denote the $(p,q)$-block of $\mathcal V$.  It is
enough to prove
\begin{equation}\label{eq:block-array-block-indices}
 T_{pq}(a,b)\cdot T_{p'q'}(c,d)
 +
 T_{pq'}(a,d)\cdot T_{p'q}(c,b)
 =
 2\delta_{pp'}\delta_{ac}\delta_{qq'}\delta_{bd}.
\end{equation}

Set
\begin{align*}
 W_0&=\operatorname{span}\{\mathbf A,\mathbf B,\mathbf C,
 \mathbf E_2,\mathbf E_3,\mathbf E_4\},&
 W_{12}&=\operatorname{span}\{\mathbf E_5,\mathbf E_6,\mathbf E_7,\mathbf E_8\},\\
 W_{13}&=\operatorname{span}\{\mathbf E_9,\mathbf E_{10},
 \mathbf E_{11},\mathbf E_{12}\},&
 W_{23}&=\operatorname{span}\{\mathbf E_{13},\mathbf E_{14},
 \mathbf E_{15},\mathbf E_{16}\}.
\end{align*}
These four spaces are mutually orthogonal.  Moreover, the block supports
are arranged as
\[
\begin{pmatrix}
 W_0    & W_{12} & W_{13}\\
 W_{12} & W_0    & W_{23}\\
 W_{13} & W_{23} & W_0
\end{pmatrix}.
\]
Thus a scalar product between entries of two blocks can be non-zero only
when the corresponding support labels in this matrix agree.

Suppose first that $p=p'$ and $q=q'$.  Then both scalar products in
\eqref{eq:block-array-block-indices} occur inside a single $4\times4$
block.  The four vectors defining each block are orthonormal for the
ambient dot product, and hence \eqref{eq:block-array-block-indices}
follows from \eqref{eq:quat-identity-same-block}.

If exactly one of the equalities $p=p'$ and $q=q'$ holds, then the two
scalar products in \eqref{eq:block-array-block-indices} pair vectors from
orthogonal support spaces.  Hence the left-hand side is zero, and so is
the right-hand side.

It remains to treat the case $p\ne p'$ and $q\ne q'$.  The right-hand
side of \eqref{eq:block-array-block-indices} is then zero.  By symmetry
in $(p,a)$ and $(p',c)$, we may assume $p<p'$.  If $\{q,q'\}\ne\{p,p'\}$, 
then the two scalar products again involve orthogonal support spaces.
Thus only the two orientations $(q,q')=(p,p')$ and $(q,q')=(p',p)$ on a principal $2\times2$ block face remain.

Consider first the face $\{1,2\}$.  The pairing between the two diagonal
blocks is represented by $(\, ,\,)_-$, because
\[
 \mathbf A\cdot\mathbf B=-1,\qquad
 \mathbf E_m\cdot\mathbf E_m=1\quad (m=2,3,4).
\]
For the orientation $(q,q')=(1,2)$, the left-hand side of
\eqref{eq:block-array-block-indices} is therefore
\[
 (\mathsf D_{ab},\mathsf D_{cd})_-
 +
 (\mathsf P_{ad},\mathsf P_{bc})_+,
\]
which vanishes by \eqref{eq:quat-identity-DP}.  The opposite orientation
$(q,q')=(2,1)$ is the same identity with $b$ and $d$ interchanged.

Now consider the faces $\{1,3\}$ and $\{2,3\}$.  In both cases the
pairing between the two diagonal blocks is represented by
$(\, ,\,)_+$, since
\[
 \mathbf A\cdot\mathbf C=1,\qquad
 \mathbf B\cdot\mathbf C=1.
\]
For the orientation $(q,q')=(p,p')$, the left-hand side of
\eqref{eq:block-array-block-indices} becomes
\[
 (\mathsf D_{ab},\mathsf S_{cd})_+
 +
 (\mathsf Y_{ad},\mathsf Z_{cb})_+,
\]
which is zero by \eqref{eq:quat-identity-DSYZ}.  The opposite orientation
$(q,q')=(p',p)$ is again obtained by interchanging $b$ and $d$.

This proves \eqref{eq:block-array-compact}.
\end{proof}

\begin{lemma}\label{lem:complex-existence}
There exists a sum-of-squares formula of type $[12,12,18]$ over
$\Q(i)$, and hence over $\C$.
\end{lemma}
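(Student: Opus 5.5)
The plan is to assemble the lemma from Proposition~\ref{prop:criterion} and Proposition~\ref{prop:block-array-verification}. First I would take the array $(v_{ij})$ of \eqref{eq:main-table}. By construction every entry lies in $V=K^{18}$ with $K=\Q(i)$, since $\mathbf A,\mathbf B,\mathbf C$ and the $\mathbf E_m$ have coordinates in $\{0,\pm1,\pm i\}$. Next I would define the bilinear forms $z_1,\dots,z_{18}$ by \eqref{eq:def-zalpha}; each $z_\alpha$ is then a $K$-bilinear form in $x=(x_1,\dots,x_{12})$ and $y=(y_1,\dots,y_{12})$. With this notation, $B(x,y)=\sum_{i,j}v_{ij}x_iy_j$ is exactly $(z_1,\dots,z_{18})$, as Proposition~\ref{prop:criterion} requires.

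Proposition~\ref{prop:block-array-verification} gives the coefficient equations $v_{ij}\cdot v_{k\ell}+v_{i\ell}\cdot v_{kj}=2\delta_{ik}\delta_{j\ell}$ for all $1\leqslant i,k,j,\ell\leqslant 12$. Here the dot product is the bilinear one on $K^{18}$, which is precisely the form used in Proposition~\ref{prop:criterion}. Since $\operatorname{char}K=0\neq2$, the ``if'' direction of Proposition~\ref{prop:criterion} yields the polynomial identity $(\sum x_i^2)(\sum y_j^2)=\sum_{\alpha=1}^{18}z_\alpha^2$ over $K$. This is a formula of type $[12,12,18]$ over $\Q(i)$.

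For $\C$, I would fix the embedding $\Q(i)\subset\C$. The same $z_\alpha$ are then $\C$-bilinear forms, and the identity, being a polynomial identity with coefficients in $\Q(i)$, persists in $\C[x,y]$. The squares involved are ordinary algebraic squares, with no conjugation, as stipulated in the introduction.

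The only place needing care is confirming that the dot product in Proposition~\ref{prop:block-array-verification} is the bilinear one, not a Hermitian one. This is what makes the Gram relations \eqref{eq:ABC}, in particular $\mathbf B\cdot\mathbf B=1-1+i^2\cdot(-1)\cdot(-1)$ computed bilinearly, valid. Once that is checked, the argument is a direct combination of the two propositions, and I expect no real obstacle.
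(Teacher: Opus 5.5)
Your proposal is correct and is exactly the paper's argument: Proposition~\ref{prop:block-array-verification} supplies the coefficient equations, Proposition~\ref{prop:criterion} turns them into the identity over $\Q(i)$, and extension of scalars gives it over $\C$. One harmless slip: your expression for $\mathbf B\cdot\mathbf B$ evaluates to $-1$ as written; the correct bilinear computation is $(-1)^2+1^2+i^2=1$. This does not affect the argument, since \eqref{eq:ABC} is already established.
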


\begin{proof}
By Proposition~\ref{prop:block-array-verification}, the array
$(v_{ij})$ satisfies the compact coefficient equations
\eqref{eq:compact-criterion}.  Proposition~\ref{prop:criterion} applied
to the bilinear forms \eqref{eq:def-zalpha} therefore gives a
sum-of-squares formula of type $[12,12,18]$ over $K=\Q(i)$.
Extending scalars gives the formula over $\C$.
\end{proof}

\subsection{Nonexistence over $\mathbb R$}\label{sec:nonexistence}

We now turn to the real case.  

\begin{proposition}[{\cite[Theorem~8.2]{Yiu1986}}]\label{prop:yiu}
There is no real sum-of-squares formula of type $[12,12,20]$.
\end{proposition}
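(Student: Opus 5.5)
This statement is quoted from Yiu's work, so the paper itself will simply cite \cite[Theorem~8.2]{Yiu1986}. If I had to reconstruct a proof, I would follow Yiu's combinatorial reduction from real formulae to integer ones.

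The first step is the reduction to integer matrices. By the Hopf--Stiefel condition, a real $[12,12,n]$ formula requires $\binom{n}{k}$ to be even for $n-12<k<12$. For $n=20$ this is not immediately contradictory, so topology alone will not settle the case. The real input is Yiu's theorem: over $\R$, the existence of an $[r,s,n]$ formula with $n\le 23$, or in the relevant range, can be upgraded to the existence of an \emph{intercalate} integer formula. This is an $r\times s$ matrix whose entries are signed basis vectors $\pm\mathbf e_\alpha$ in $\R^n$, and it must satisfy the compact criterion of Proposition~\ref{prop:criterion}. I would prove this by averaging and rigidity arguments over the positive-definite norm. One shows that the $v_{ij}$ may be taken in the integer lattice generated by orthonormal vectors, using that $v_{ij}\cdot v_{ij}=1$ and $v_{ij}\cdot v_{i\ell}=0$ in a positive-definite space. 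This is exactly where $\R$ is used, in contrast with the $\mathbf A,\mathbf B$ of the present construction.

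Second, I would analyse intercalate matrices of type $[12,12,20]$ combinatorially. Each colour (basis vector) appears at most once per row and column. Any two cells $(i,j)$ and $(k,\ell)$ with the same colour force $(i,\ell)$ and $(k,j)$ to share a colour, with a sign condition. Using the structure theory of intercalate matrices, I would show that a $12\times12$ intercalate matrix needs at least $26$ colours. The tools are the decomposition into $2\times2$ intercalates, the constraint from Hurwitz--Radon numbers on each $4\times4$ or $8\times8$ subblock, and a case analysis on a maximal principal $8\times8$ submatrix, which must be equivalent to octonionic multiplication. Counting colours in the complementary $4\times 8$ and $4\times4$ blocks would then rule out $20$.

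I expect the main obstacle to be the first step: the upgrade from an arbitrary real formula to an intercalate one. This is genuinely deep, and in Yiu's work it relies on a careful study of the real geometry. The colour count in the second step is a long but finite case analysis, which could even be checked by computer.
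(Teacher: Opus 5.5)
The gap is your first step, and it carries the whole weight of the statement. No theorem upgrades real formulae to integer (intercalate) ones in a range containing $[12,12,20]$. The mechanism you sketch also cannot work. Positive-definiteness only makes each row and each column of $(v_{ij})$ orthonormal. The cross products $v_{ij}\cdot v_{k\ell}$ with $i\ne k$, $j\ne\ell$ are only required to satisfy $v_{ij}\cdot v_{k\ell}=-v_{i\ell}\cdot v_{kj}$, and they can vary continuously.

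Already in type $[2,2,4]$ you can see this. The array $v_{11}=\mathbf e_1$, $v_{12}=\mathbf e_2$, $v_{21}=\cos\theta\,\mathbf e_2+\sin\theta\,\mathbf e_3$, $v_{22}=-\cos\theta\,\mathbf e_1+\sin\theta\,\mathbf e_4$ satisfies the criterion of Proposition~\ref{prop:criterion} for every $\theta$. Its Gram matrix has eigenvalues $1\pm\cos\theta$, and these are unchanged by orthogonal substitutions in $x$, $y$, $z$. An integer formula of this type has Gram eigenvalues $\{1\}$ or $\{0,2\}$. So for $\cos\theta\notin\{0,\pm1\}$ the formula is not equivalent to any integer one. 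Hence no averaging or rigidity argument that uses only $v_{ij}\cdot v_{ij}=1$ and row/column orthogonality can put the $v_{ij}$ into an integral lattice. Any upgrade would have to be a pure existence statement that uses the specific length $n$.

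Read that way, Step 1 is still not a reduction. Your Step 2, that a $12\times12$ intercalate matrix needs $26$ colours, is the known integer result $12*_{\mathbb Z}12=26$. Granting it, the case $n=20$ of Step 1 is logically equivalent to the proposition itself. The range $n\le 23$ you invoke would rule out real $[12,12,n]$ formulae for all $n\le 23$, which is more than the bound $N_{\R}(12,12)\geqslant 21$ the paper records. The general principle would give $12*12=26$ over $\R$, the real case of Adem's prediction, and that is not known. So your outline moves all the difficulty into an unproved claim and supplies no proof.

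Yiu's Theorem~8.2 is obtained by a different, topological route: it studies quadratic forms between spheres, rather than passing to integer formulae. For instance, the Hopf construction turns a real $[12,12,20]$ formula into a quadratic map $S^{23}\to S^{20}$. In the paper the statement is simply quoted from \cite[Theorem~8.2]{Yiu1986}, and that citation is the proof.
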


\begin{lemma}\label{lem:real-nonexistence}
There is no real sum-of-squares formula of type $[12,12,n]$ for any
$n\leqslant20$. In particular, there is no real formula of type
$[12,12,18]$.
\end{lemma}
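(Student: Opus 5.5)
The plan is to reduce everything to Proposition~\ref{prop:yiu} by a padding argument. The key observation is that admissibility over $\R$ is monotone in the length $n$. Suppose a real formula of type $[12,12,n]$ exists with $n\leqslant 20$, and write
\[
 \left(\sum_{i=1}^{12}x_i^2\right)\left(\sum_{j=1}^{12}y_j^2\right)=\sum_{\alpha=1}^{n}z_\alpha(x,y)^2 .
\]
We set $z_{n+1}=\cdots=z_{20}=0$. These are the zero bilinear forms, which are certainly $\R$-bilinear. Adding their squares changes nothing, so the identity persists as one of type $[12,12,20]$.

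The same step can be phrased through the coefficient criterion. Embed $\R^n\hookrightarrow\R^{20}$ as the first $n$ coordinates. This embedding preserves the dot product, so the vectors $v_{ij}$ still satisfy \eqref{eq:compact-criterion} in $\R^{20}$. Proposition~\ref{prop:criterion} then yields a formula of type $[12,12,20]$.

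Either way, we obtain a real formula of type $[12,12,20]$, which contradicts Proposition~\ref{prop:yiu}. Hence no real formula of type $[12,12,n]$ exists for any $n\leqslant20$, and the case $n=18$ follows at once.

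The only point requiring care is to check that Yiu's theorem, as cited, concerns exactly the positive-definite sum-of-squares formulae over $\R$ defined in \eqref{eq:intro-def}. Once that is confirmed, no further obstacle arises, since the padding step is purely formal.
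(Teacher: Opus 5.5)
Your proposal is correct and is the paper's argument: both pad a hypothetical real $[12,12,n]$ formula with $20-n$ zero bilinear forms to get a real $[12,12,20]$ formula, which contradicts Proposition~\ref{prop:yiu}. Your second version, embedding $\R^n$ into $\R^{20}$ and applying Proposition~\ref{prop:criterion}, is just the same padding step restated in coefficient form.
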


\begin{proof}
If a real formula of type $[12,12,n]$ existed for some $n\leqslant20$,
then adjoining $20-n$ zero bilinear forms would give a real formula of
type $[12,12,20]$, contradicting Proposition~\ref{prop:yiu}.
\end{proof}

\begin{corollary}\label{cor:signed-shapiro}
There exists a real signed formula
\[
 \left(\sum_{i=1}^{12}x_i^2\right)
 \left(\sum_{j=1}^{12}y_j^2\right)
 =
 u_1^2+\cdots+u_{17}^2-w^2,
\]
where $u_1,\ldots,u_{17},w$ are real bilinear forms.  However,
there is no real sum-of-squares formula of type $[12,12,n]$ for any
$n\leqslant20$.  In Shapiro's notation, this gives
\[
   12\#12\leqslant 17
   \qquad\text{but}\qquad
   12*12\geqslant 21>17.
\]
Thus a real signed formula with $p$ positive squares need not imply
$r*s\leqslant p$.
\end{corollary}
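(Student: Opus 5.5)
The plan is to take the $\Q(i)$-formula of Lemma~\ref{lem:complex-existence} and isolate the only place where $i$ appears: the third coordinate. By the displayed formulae, $z_3=i\,w$ with
\[
 w=\sum_{t=5}^{8}x_t y_t-\sum_{t=9}^{12}x_t y_t,
\]
a bilinear form with integer (hence real) coefficients. All other forms $z_\alpha$ with $\alpha\neq3$ also have real coefficients. The reason is that $\mathbf A$, $\mathbf E_m$ and the first two coordinates of $\mathbf B,\mathbf C$ are integral, and the array \eqref{eq:main-table} uses only signs.

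Next I would evaluate the identity. Since $z_3^2=i^2w^2=-w^2$, the identity of Lemma~\ref{lem:complex-existence} reads
\[
 \left(\sum_{i=1}^{12}x_i^2\right)\left(\sum_{j=1}^{12}y_j^2\right)=\sum_{\alpha\neq3}z_\alpha^2-w^2 .
\]
This is a polynomial identity with integer coefficients on both sides. It therefore holds over $\R$, and indeed over $\mathbb Z$. Relabelling the seventeen forms $z_\alpha$ with $\alpha\ne3$ as $u_1,\dots,u_{17}$ gives the signed formula.

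To present this cleanly, I would rederive it directly from the coefficient criterion with a modified bilinear form, rather than by substitution. Replace $\mathbf B,\mathbf C$ by $\mathbf B'=-\mathbf e_1+\mathbf e_2+\mathbf e_3$ and $\mathbf C'=\mathbf e_1+\mathbf e_2-\mathbf e_3$ in $\R^{18}$, and equip $\R^{18}$ with the form of signature $(17,1)$ that is negative on $\mathbf e_3$. The Gram relations \eqref{eq:ABC} are then reproduced. The relations among the $\mathbf E_m$ are unchanged, since the $\mathbf E_m$ avoid $\mathbf e_3$. Consequently the proof of Proposition~\ref{prop:block-array-verification}, which uses only these Gram values, goes through verbatim. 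Proposition~\ref{prop:criterion} also holds for any diagonal form, since its proof is pure polarisation.

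The second assertion is exactly Lemma~\ref{lem:real-nonexistence}, which gives $12*12\geqslant21$. For $12\#12\leqslant17$, I would invoke Shapiro's observation quoted in the introduction: a signed formula with $p$ positive squares yields a nonsingular bilinear map to $\R^p$. The idea is that if $u(x,y)=0$ for nonzero real $x,y$, then the left side is positive while the right side equals $-w^2\leqslant0$. Hence $(u_1,\dots,u_{17})$ is nonsingular.

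The only potential obstacle is checking that no coordinate other than the third carries a factor of $i$. This is immediate from the definitions of $\mathbf A,\mathbf B,\mathbf C,\mathbf E_m$.
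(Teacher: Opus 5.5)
Your proposal is correct and follows the paper's proof: you note that $z_3=iw$ while every other $z_\alpha$ has real coefficients, which turns Lemma~\ref{lem:complex-existence} into the signed identity. You then cite Lemma~\ref{lem:real-nonexistence} for $12*12\geqslant 21$ and show the seventeen positive forms are nonsingular by the same sign argument the paper uses. Your optional rederivation with real $\mathbf B',\mathbf C'$ and a signature-$(17,1)$ form is also valid, since it reproduces the Gram relations \eqref{eq:ABC}, but it only repackages the same computation.
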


\begin{proof}
Put
\[
   w=\sum_{t=5}^{8}x_t y_t-\sum_{t=9}^{12}x_t y_t .
\]
In the explicit formula over $\Q(i)$, the forms
$z_1,z_2,z_4,\ldots,z_{18}$ have real coefficients, while
$z_3=iw$.  Hence Lemma~\ref{lem:complex-existence} gives
\[
 \left(\sum_{i=1}^{12}x_i^2\right)
 \left(\sum_{j=1}^{12}y_j^2\right)
 =
 z_1^2+z_2^2+\sum_{\alpha=4}^{18}z_\alpha^2-w^2 .
\]
This is the required signed formula with $17$ positive squares and one
negative square.

By Lemma~\ref{lem:real-nonexistence}, no real sum-of-squares formula of
type $[12,12,n]$ exists for $n\leqslant20$, so
$12*12=N_{\mathbb R}(12,12)\geqslant 21$.  On the other hand, the $17$
positive forms define a nonsingular bilinear map: if they vanished at a
pair $x\ne0$, $y\ne0$, then the displayed signed identity would give
a positive left-hand side equal to $-w(x,y)^2\leqslant0$, a contradiction.
Thus $12\#12\leqslant17$.
\end{proof}




\end{document}